\newcommand{\bA}{\mathbf{A}}
\newcommand{\bB}{\mathbf{B}}
\newcommand{\bI}{\mathbf{I}}
\newcommand{\bP}{\mathbf{P}}
\newcommand{\bX}{\mathbf{X}}
\newcommand{\bY}{\mathbf{Y}}
\newcommand{\sB}{\EuScript{B}}
\newcommand{\sC}{\EuScript{C}}
\newcommand{\sH}{\EuScript{H}}
\newcommand{\sL}{\EuScript{L}}
\newcommand{\sN}{\EuScript{N}}
\newcommand{\sP}{\EuScript{P}}
\newcommand{\sR}{\EuScript{R}}
\newcommand{\sS}{\EuScript{S}}
\newcommand{\sV}{\EuScript{V}}
\newcommand{\sX}{\EuScript{X}}
\newcommand{\bbR}{\mathbb{R}}
\newcommand{\bbC}{\mathbb{C}}
\newcommand{\bbF}{\mathbb{F}}
\newcommand{\bzero}{\mathbf{0}}
\newcommand{\bH}{\mathbf{H}}
\newcommand{\bbE}{\mathbb{E}}
\DeclareMathOperator{\Tr}{Tr}
\DeclareMathOperator{\rank}{rank}
\DeclareMathOperator{\Span}{Span}
\DeclareMathOperator{\Ext}{Ext}
\theoremstyle{plain}
\newtheorem{definition}{Definition}[section]
\newtheorem{theorem}[definition]{Theorem}
\newtheorem{corollary}[definition]{Corollary}
\newtheorem{remark}[definition]{Remark}
\newtheorem{example}[definition]{Example}
\definecolor{kartik}{rgb}{0.8,0, 0.2}
\title{\textbf{Extreme Points of Spectrahedra}}
\author{\textsc{Kartik G. Waghmare\footnote{\texttt{\color{blue} kartik$\cdot$waghmare@math$\cdot$ethz$\cdot$ch} \hfill Seminar for Statistics, ETHZ, Zurich, Switzerland} and Victor M. Panaretos\footnote{\texttt{\color{blue} victor$\cdot$panaretos@epfl$\cdot$ch} \hfill Institute of Mathematics, EPFL, Lausanne, Switzerland}}}
\date{}
\begin{document}

\maketitle

\begin{abstract}
    We consider the problem of characterizing extreme points of the convex set of positive linear operators on a possibly infinite-dimensional Hilbert space under linear constraints. We show that even perturbations of points in such sets admit what resembles a Douglas factorization. Using this result, we prove that an operator is extreme iff a corresponding set of linear operators is dense in the space of trace-class self-adjoint operators with range contained in the closure of the range of that operator. If the number of constraints is finite, we show that the extreme point must be of low-rank relative to the number of constraints and derive a purely rank-based characterization of the extreme points. In the finite-dimensional setting, our results lead to a remarkably simple characterization of the elliptope, that is, the set of correlation matrices, in terms of the Hadamard product which allows us to characterize the set of matrices which constitute the equality case of the Hadamard rank inequality when the involved matrices are equal and positive semi-definite. We illustrate the importance of our results using examples from statistics and quantum mechanics.\\
\end{abstract}

\noindent
\textbf{Keywords:} semi-definite optimization, extreme points, Hadamard product, Hilbert space.\\

\noindent
\textbf{MSC 2020 Classification:} 90C22, 46C05, 62R10, 	47L07.

\tableofcontents

\section{Introduction}

Let $\sH$ be a separable (real or complex) Hilbert space of possibly infinite dimension. For a linear operator $\bA: \sH \to \sH$, we define the \emph{trace} of $\bA$ as $\Tr \bA = \sum_{i} \langle \bA e_{i}, e_{i} \rangle$ where $\{e_{i}\}$ is an orthonormal basis of $\sH$. Let $A$ be a set.  For every $\alpha \in A$, let $\bA_{\alpha}: \sH \to \sH$ be a bounded linear operator and $c_{\alpha}$ be a constant. Corresponding to $\{(\bA_{\alpha}, c_{\alpha})\}_{\alpha \in A}$, we define the \emph{spectrahedron} $\sC$ as the set of bounded positive linear operators $\bP: \sH \to \sH$ satisfying the linear constraints
\begin{equation*}
    \Tr \bA_{\alpha}\bP = c_{\alpha} \mbox{ for every } \alpha \in A.
\end{equation*}
We consider the problem of characterizing the extreme points of $\sC$. An operator $\bP \in \sC$ is said to be \emph{extreme} if it cannot be represented as a strict convex combination of two distinct points in $\sC$.

The extreme points of general convex sets are of fundamental importance in convex analysis and optimization theory. For compact convex sets in locally convex Hausdorff spaces, the Krein-Milman theorem states that the set is the closure of the convex hull of its extreme points, and Bauer's maximum principle states that the unique maximum of a continuous convex function on the set is attained at an extreme point \cite{aliprantis2006}. As a consequence, the extreme points of spectrahedra play an important role in semi-definite optimization problems. The same is true for optimization problems on convex sets, such as the unit ball in spectral norm, which are not spectrahedra themselves but admit a spectrahedral representation \cite{saunderson2015}. Extreme points of spectrahedra also appear in other contexts, because they express the idea of irrepresentability. For example, in multivariate statistics, an extreme covariance would correspond to a random vector which can't be represented as the mixture of other similarly constrained independent random vectors. Similarly, in quantum physics, an extreme density matrix represents an extreme state which can't be represented as the superposition of other states under the same constraints. 

The finite-dimensional version of the problem, which concerns positive semi-definite matrices, has naturally been a subject of significant attention in the literature on semi-definite optimization. Remarkably, it has been shown in this setting that the extreme points are always of low rank relative to the number of linear constraints \cite{pataki1998}. Exploiting this simple fact has led to significant gains in computational peformance in solving semi-definite optimization problems \cite{burer2005}. The special cases of extreme correlation matrices and extreme states of finite-state quantum systems in the context of quantum marginal problems have also been studied in the literature in linear algebra and quantum physics \cite{li1994, christensen1979,grone1990, parthasarathy2005, rudolph2004,chiribella2024}.

We are primarily concerned with the infinite-dimensional version of the problem which is relevant to infinite-dimensional semi-definite optimization problems but doesn't seem to have received attention in the extant literature with the notable exception of \cite{kiukas2008}.
Infinite-dimensional linear and conic programming has a long history \cite{duffin1956}. Our interest in infinite-dimensional semi-definite programs is driven by certain problems in the statistics of functional data and in quantum mechanics, where covariances of random functions and states of particles, respectively, naturally correspond to positive operators of finite trace on an infinite-dimensional Hilbert space \cite{hsing2015, sakurai2020}. For statistics, classical results in the finite-dimensional setting address only the case when the random element takes values in a finite-dimensional vector space, which is not the case for random elements corresponding to continuous-time stochastic processes for example \cite{hsing2015}. Similarly, for quantum mechanics, these results account only for finite-state quantum mechanical systems which deal with spins or polarizations, since the state of a particle naturally corresponds to a density operator in a finite-dimensional Hilbert space in these cases. They are not directly applicable to those dealing with continuous quantities such as position or momentum because the state is then represented by a density operator in an infinite-dimensional Hilbert space \cite{sakurai2020}. Using our results, one can address the case when the concerned quantity is continuous and bounded as is the case for the position of a particle in a box, for example. 

While our primary focus is on infinite-dimensional Hilbert spaces, our findings are also relevant in the finite-dimensional context, where somewhat similar results have been previously established before \cite{li1994}. However, we provide a more quantitative description of perturbations, which is crucial for extending the results to the infinite-dimensional case, and of interest even in finite dimensions. Our approach is inspired by \cite{waghmare2024}, where the extreme points of the set of positive-definite completions of kernels were characterized in terms of bounded linear functionals on a projective tensor product space.
Needless to say, we also establish the low-rankedness of extreme points in infinite dimensions when the number of constraints is finite and prove a rank-based characterization of the extreme points. 
Importantly, our results lead to a remarkably simple characterization of the extreme points of the elliptope (the set of correlation matrices) in terms of the Hadamard product which appears novel. 

The article is organized as follows: We begin by presenting the basic background required for working with operators in infinite-dimensional Hilbert spaces in Section \ref{sec:background}. This is followed by a more refined description of the problem and our results in Section \ref{sec:problem}. We discuss our characterization of the extreme points of the elliptope and the interesting connection to the Hadamard rank inequality in Section \ref{sec:hadamard}. 
Finally, we illustrate how our results are useful for solving some infinite-dimensional semi-definite optimization problems arising in statistics and quantum mechanics in Section \ref{sec:applications}.

\section{Background and Notation}
\label{sec:background}

For a convex subset $\sC$ of a vector space $\sV$, we say that $p \in \sC$ is an extreme point on $\sC$ if it cannot be represented as a strict convex combination of two distinct points in $\sC$, that is, there do not exist distinct $p_{1}, p_{2} \in \sC$ and $\alpha \in (0, 1)$ such that $p = \alpha p_{1} + (1-\alpha) p_{2}$. It is not difficult to see that $p \in \sC$ is an extreme point of $\sC$ if it does not admit a nonzero \emph{even perturbation}, that is, there does not exist a nonzero $h \in \sV$ such that $p + h, p-h \in \sC$. We denote the set of extreme points of $\sC$ by $\Ext \sC$.\\

Let $\sH$ be a separable Hilbert space on the field $\bbF = \bbR$ or $\bbC$. The \emph{adjoint} $\bA^{\ast}$ of a bounded linear operator $\bA: \sH \to \sH$ is the unique linear operator $\bA^{\ast}: \sH \to \sH$ which satisfies $\langle \bA x, y \rangle = \langle x, \bA^{\ast} y \rangle$ for every $x, y \in \sH$. We say that $\bA$ is \emph{self-adjoint} if $\bA^{\ast} = \bA$. A linear operator $\bP: \sH \to \sH$ is said to be positive if it is self-adjoint and satisfies $\langle x, \bP x \rangle \geq 0$ for every $x \in \sH$. An operator $\bA: \sH \to \sH$ is said to be \emph{compact} if for every sequence $\{x_{i}\}_{i=1}^{\infty} \subset \sH$, the image $\{ \bA x_{i} \}_{i=1}^{\infty}$ admits a convergent subsequence. For every $f, g \in \sH$, we define the linear operator $f \otimes g: \sH \to \sH$ as $(f \otimes g)x = \langle g, x \rangle f$ for every $x \in \sH$. The range $\{y \in \sH: y = \bA x \mbox{ for some } x \in \sH \}$ and null space $\{y \in \sH: \bA y = 0\}$ of $\bA$ are denoted as $\sR(\bA)$ and $\sN(\bA)$. The symbols $\bI$ and $\bzero$ denote the identity and zero operator, respectively. We denote the set of bounded, positive and compact operators on $\sH$ by $\sS$, $\sP$ and $\sS_{\infty}$ respectively. 

According to the spectral theorem for compact self-adjoint operators, every compact positive operator $\bP: \sH \to \sH$ can be represented as $\bP = \sum_{j=1}^{\infty} \lambda_{j} f_{j} \otimes f_{j}$ where $\{(\lambda_{j}, f_{j})\}_{j=1}^{\infty}$ are the eigenvalue-eigenvector pairs of $\bP$, $\lambda_{j} \geq 0$ for $j \geq 1$ and $\lim_{j \to \infty} \lambda_{j} = 0$. For every $\alpha > 0$, we define the power $\bP^{\alpha}$ as $\bP^{\alpha} = \sum_{j=1}^{\infty} \lambda_{j}^{\alpha} f_{j} \otimes f_{j}$.\\

Let $\{e_{i}\}_{i=1}^{\infty} \subset \sH$ be an orthonormal basis of $\sH$. We define the \emph{trace} $\Tr \bA$ of a bounded operator $\bA$ as $\Tr \bA = \sum_{j=1}^{\infty} \langle e_{j},  \bA e_{j} \rangle$ when the series converges. Note that trace has the cyclic property, that is, $\Tr \bA \bB = \Tr \bB \bA$ for $\bA, \bB \in \sS$ such that the trace of at least one of the products is defined. It can be shown that the particular choice of the orthonormal basis is immaterial to the definition of trace. Note that if $\bA$ is a compact operator, then $\bA^{\ast}\bA$ is compact and positive, and we can define $|\bA| = (\bA^{\ast}\bA)^{1/2}$. For $1 \leq p < \infty$, the \emph{trace ideal} $\sS_{p}$ is the Banach space
\begin{equation*}
    \sS_{p} = \textstyle \Big\{\bA: \sH \to \sH ~\Big|~ \bA \mbox{ is compact and } \sum_{j=1}^{\infty} \langle e_{j},  |\bA|^{p} e_{j} \rangle < \infty \Big\}
\end{equation*}
equipped with the norm $\|\bA\|_{p} = \Tr |\bA|^{p}$. The particular instances $\sS_{1}$ and $\sS_{2}$ are referred to as the space of trace-class operators and the space of Hilbert-Schmidt operators, respectively. Moreover, $\sS_{p}$ is indeed an ideal in the algebraic sense: for $\bA \in \sS_{p}$ and $\bX \in \sS$, we have $\bA\bX , \bX\bA \in \sS_{p}$. In fact, for $1 \leq p, q, r \leq \infty$ such that $1/r = 1/p+1/q$ (we consider $1/\infty$ as $0$), we have for $\bA \in \sS_{p}$ and $\bB \in \sS_{q}$, that $\bA\bB \in \sS_{r}$ and $\|\bA\bB\|_{r} \leq \|\bA\|_{p}\|\bB\|_{q}$. 

It can be shown that the trace $\Tr: \sS_{1} \to \bbF$ given by $\Tr \bA = \sum_{j=1}^{\infty} \langle e_{j},  \bA e_{j} \rangle$ is actually a well-defined bounded linear functional on $\sS_{1}$. Furthermore, for $1 < p, q < \infty$ such that $1/p + 1/q = 1$, every continuous linear functional $l: \sS_{p} \to \bbF$ can be represented by an operator $\bA \in \sS_{q}$ in the sense that
\begin{equation*}
    l(\bX) = \Tr \bA \bX
\end{equation*} 
for every $\bX \in \sS_{p}$. In other words, $\sS_{p}^{\ast} \equiv \sS_{q}$. For the excepted cases of $p = 1$ and $\infty$, we have $\sS_{1}^{\ast} = \sS$ and $\sS_{\infty}^{\ast} = \sS_{1}$ instead. A linear functional $l$ is said to be self-adjoint if $l(\bA) = l(\bA^{\ast})$ for every $\bA$ in its domain. The reader is invited to consult Section 3.6-3.7 of \cite{simon2015} for a more complete introduction to the subject of trace ideals.

\section{Problem Statement and Results}
\label{sec:problem}

Let $1 \leq p < \infty$ and $q$ be such that $1/p + 1/q = 1$ (we take $q = \infty$ for $p =1$). 
Let $A$ be a set, $\{\bA_{\alpha}\}_{\alpha \in A} \subset \sS_{p}$ and $\{c_{\alpha}\}_{\alpha \in A} \subset \bbF$. 
Define $\sP_{q} = \sP \cap \sS_{q}$. 
Consider the \emph{spectrahedron} $\sC$ given by 
\begin{equation}\label{eqn:linear-constraints}
    \sC = \Big\{ \bP \in \sP_{q} ~\Big|~ \Tr \bP\bA_{\alpha} = c_{\alpha} \mbox{ for every } \alpha \in A \Big\}
\end{equation}
and notice that $\sC$ is a closed convex subset of closed convex subset of $\sS_{q}$.
We are interested in deriving the necesary and sufficient conditions for $\bP \in \sC$ to be an extreme point of $\sC$. 
The operators $\bA_{\alpha}$ will be assumed to be self-adjoint and this comes without any loss of generality since $\Tr \bA^{\ast}\bP = \Tr \left[\tfrac{1}{2}(\bA + \bA^{\ast})\bP\right]$ for $\bP \succeq \bzero$ where $\tfrac{1}{2}(\bA + \bA^{\ast})$ is symmetric.
The set $\sP$ can be understood as a spectrahedron corresponding to the empty set $A = \varnothing$.

\subsection{Even Perturbations and Douglas Factorization}

We begin by considering a simpler modification of the problem which is when $A$ is empty and $\sC = \sP$. 
It is not difficult to see that the only extreme point is the zero operator since for every nonzero $\bP \in \sP$, we can write $\bP \pm \tfrac{1}{2}\bP \in \sP$. However, the key to understanding the general problem is to understand the structure of all possible even perturbations of $\bP \in \sP$ which can be described as a kind of Douglas factorization \cite{douglas1966}.

\begin{theorem}[Douglas Factorization]\label{thm:douglas-factorization}
    An element $\bH \in \sS$ is an even perturbation of $\bP \in \sP$ in $\sP$ iff 
    $$\bH = \sqrt{\bP}\bX\sqrt{\bP}$$ for some self-adjoint $\bX \in \sS$ such that $\sR(\bX) \subset \overline{\sR(\bP) }$ and $\|\bX\| \leq 1$.
\end{theorem}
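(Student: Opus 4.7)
The approach I would take is to characterize even perturbations through a Cauchy--Schwarz-type inequality bounding $\bH$ by $\bP$, and then to extract $\bX$ via a Riesz-style argument on the range of $\sqrt{\bP}$. The sufficiency direction is direct: writing $\bP \pm \bH = \sqrt{\bP}(\bI \pm \bX)\sqrt{\bP}$, self-adjointness of $\bX$ together with $\|\bX\|\leq 1$ gives $\bI \pm \bX \succeq \bzero$, so
\begin{equation*}
\langle v, \sqrt{\bP}(\bI \pm \bX)\sqrt{\bP} v\rangle = \langle \sqrt{\bP} v, (\bI \pm \bX)\sqrt{\bP} v\rangle \geq 0,
\end{equation*}
which places $\bP \pm \bH$ in $\sP$.

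For the converse, $\bP \pm \bH \succeq \bzero$ forces $\bH$ to be self-adjoint, and the crucial first step is to derive
\begin{equation*}
|\langle x, \bH y\rangle|^2 \leq \langle x, \bP x\rangle \langle y, \bP y\rangle \quad \text{for all } x,y \in \sH.
\end{equation*}
I would obtain this by adding the two non-negative real quadratics $\langle tx + y, (\bP - \bH)(tx+y)\rangle \geq 0$ and $\langle tx - y, (\bP + \bH)(tx - y)\rangle \geq 0$ in $t \in \bbR$: the $\bP$-cross-terms cancel while the $\bH$-cross-terms reinforce, leaving a non-negative real quadratic in $t$ whose discriminant yields $[\Re\langle x, \bH y\rangle]^2 \leq \langle x, \bP x\rangle \langle y, \bP y\rangle$. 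Replacing $y$ by $e^{i\theta} y$ and optimizing over $\theta$ then upgrades the real part to the full modulus.

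Two consequences drive the rest of the argument. First, $\bP x = 0$ forces $\langle x, \bH y\rangle = 0$ for every $y$ and hence $\bH x = 0$, so $\sN(\bP) \subseteq \sN(\bH)$ and, by self-adjointness of $\bH$, $\sR(\bH) \subseteq \overline{\sR(\bP)}$. Second, the assignment $B(\sqrt{\bP} x, \sqrt{\bP} y) := \langle x, \bH y\rangle$ defines a sesquilinear form on $\sR(\sqrt{\bP})$, well-defined thanks to $\sN(\sqrt{\bP}) = \sN(\bP) \subseteq \sN(\bH)$ and bounded of norm at most $1$ because $\|\sqrt{\bP} x\|^2 = \langle x, \bP x\rangle$. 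Extending $B$ continuously to $\overline{\sR(\bP)} \times \overline{\sR(\bP)} = \overline{\sR(\sqrt{\bP})} \times \overline{\sR(\sqrt{\bP})}$ and invoking Riesz representation produces a bounded operator $\bX$ on $\overline{\sR(\bP)}$ with $\|\bX\|\leq 1$; extending $\bX$ by zero on $\sN(\bP) = \overline{\sR(\bP)}^\perp$ delivers an operator on $\sH$ with $\sR(\bX) \subseteq \overline{\sR(\bP)}$. Self-adjointness of $\bX$ is inherited from the Hermitian symmetry $B(u,v) = \overline{B(v,u)}$ forced by $\bH^\ast = \bH$, and unfolding the definition of $B$ yields the factorization $\bH = \sqrt{\bP}\bX\sqrt{\bP}$.

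The decisive obstacle is the Cauchy--Schwarz-type inequality itself. Since $\bH$ need not commute with $\bP$, no spectral-calculus shortcut is available, and the two-quadratic discriminant trick (combined with the phase rotation for the complex scalar field) is what simultaneously unlocks the range constraint $\sR(\bX) \subseteq \overline{\sR(\bP)}$, the well-definedness of $B$, and the contractivity $\|\bX\|\leq 1$. Once this is in hand, the remainder of the argument reduces to standard functional-analytic bookkeeping.
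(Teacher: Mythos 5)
Your proof is correct and follows essentially the same route as the paper: both hinge on establishing the generalized Cauchy--Schwarz bound $|\langle x,\bH y\rangle|\le\|\sqrt{\bP}x\|\,\|\sqrt{\bP}y\|$, then defining the bounded form $B$ on $\sR(\sqrt{\bP})$, extending it to $\overline{\sR(\bP)}$, and invoking Riesz representation. The only divergence is in how that key inequality is derived --- you use the discriminant of a sum of two non-negative quadratics followed by a phase rotation, whereas the paper uses a two-term polarization identity followed by the rescaling $x\mapsto cx$, $y\mapsto y/c$ --- and your explicit phase-rotation step is in fact the more careful treatment of the complex case, which the paper's real-valued polarization identity (capturing only $\mathrm{Re}\,\langle\bH x,y\rangle$ when $\bbF=\bbC$) leaves implicit.
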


\begin{proof}
    If $\bP \pm \bH \succeq \bzero$ then $\bH^{\ast} = \bH$ and $- \langle \bP x, x \rangle \leq \langle \bH x, x \rangle \leq \langle \bP x, x \rangle$ or
    \begin{equation*}
        |\langle \bH x, x \rangle| \leq \langle \bP x, x \rangle = \langle \sqrt{\bP} x, \sqrt{\bP} x \rangle = \|\sqrt{\bP}x \|^{2}
    \end{equation*}   
    for every $x \in \sH$. 
    Moreover, we can write for $x, y \in \sH$,
    \begin{align*}
        |\langle \bH x, y \rangle| &= \tfrac{1}{4}\left| \langle \bH (x+y), (x+y) \rangle - \langle \bH (x-y), (x-y) \rangle \right| \\
        &\leq \tfrac{1}{4}\left[ \left| \langle \bH (x+y), (x+y) \rangle \right| + \left|\langle \bH (x-y), (x-y) \rangle \right|\right] \\
        &\leq \tfrac{1}{4}\left[ \|\sqrt{\bP}(x+y) \|^{2} + \|\sqrt{\bP}(x-y) \|^{2}\right] \\
        &= \tfrac{1}{2}\left[ \|\sqrt{\bP}x \|^{2} + \|\sqrt{\bP}y \|^{2}\right].
    \end{align*}
    Replacing $x$ with $cx$ and $y$ with $y/c$ for some $c > 0$ gives 
    \begin{align*}
        |\langle \bH x, y \rangle| &\leq \tfrac{1}{2}\left[ c^{2}\|\sqrt{\bP}x \|^{2} + \frac{1}{c^{2}}\|\sqrt{\bP}y \|^{2}\right]
    \end{align*}
    and minimizing over $c$ yields $|\langle \bH x, y \rangle| \leq \|\sqrt{\bP}x \| \|\sqrt{\bP}y \|$. Notice that this implies that $\sN(\bH) \supset \sN(\bP)$.
    
    Define the bilinear functional $B: \sR(\sqrt{\bP}) \times \sR(\sqrt{\bP}) \to \bbF$ as $B(\sqrt{\bP}x, \sqrt{\bP}y) = \langle \bH x, y \rangle$. Then $B$ is well-defined and continuously extends to $\smash{\overline{\sR(\sqrt{\bP})} \times \overline{\sR(\sqrt{\bP})}}$ since $\smash{|B(\sqrt{\bP}x, \sqrt{\bP}y)| \leq \|\sqrt{\bP}x \| \|\sqrt{\bP}y \|}$. By the Riesz representation theorem, there exists a unique self-adjoint $\bX \in \sS$ such that $\smash{\sR(\bX) \subset \overline{\sR(\sqrt{\bP})} = \overline{\sR(\bP)}}$, $\|\bX\| \leq 1$ and $B(x, y) = \langle \bX x, y \rangle$ for every $\smash{x,y \in \overline{\sR(\sqrt{\bP})}}$.
    Furthermore, if we define $B(x, y) = 0$ if $x$ or $y \in \sN(\bP)$ then $B$ extends to $\sH \times \sH$ such that $|B(x, y)| \leq \|x\|\|y\|$ and $B(\sqrt{\bP}x, \sqrt{\bP}y) = \langle \bH x, y \rangle$ for $x,y \in \sH$.  It follows that $\bH = \sqrt{\bP}\bX\sqrt{\bP}$. The converse is immediate. 
\end{proof}

\begin{remark}\label{remark:Xfunctional}
    Note that by restricting $\bX$ to $\overline{\sR(\bP)}$, we can think of it as a bounded linear operator from $\overline{\sR(\bP)}$ to $\overline{\sR(\bP)}$. Because $\sS_{1}^{\ast} = \sS$ for any Hilbert space, we can think of $\bX: \overline{\sR(\bP)} \to \overline{\sR(\bP)}$ as a self-adjoint bounded linear functional $l_{\bX}: \sS_{1}(\overline{\sR(\bP)}) \to \bbF$ on the space $\sS_{1}(\overline{\sR(\bP)})$ of trace class linear operators $\bA: \overline{\sR(\bP)} \to \overline{\sR(\bP)}$ as $l_{\bX}(\bA) = \Tr \bX \bA$.
\end{remark}

Note that there is a one-one correspondence between the even perturbations $\bH$ of $\bP$ and the self-adjoint operators $\bX \in \sS$ satisfying $\sR(\bX) \subset \overline{\sR(\bP)}$ and $\|\bX\| \leq 1$ of Theorem \ref{thm:douglas-factorization}. With a slight abuse of terminology, we define the \emph{set of even perturbations} of $\bP$ in $\sC$ as \begin{center}$\sC_{\bP} = \{\bP + \bH: \bH \in \sS \mbox{ is an even perturbations of } \bP \mbox{ in } \sP \} \subset \sP.$\end{center} The set $\sC_{\bP}$ represents how much we can (evenly) move around $\bP$ in $\sS$ without escaping $\sC$. It is not difficult to see that $\sC_{\bP}$ can be parametrized in terms of the \emph{ball} $\sB_{\bP} = \{\bX \in \sS: \bX^{\ast} = \bX, \sR(\bX) \subset \overline{\sR(\bP)} \mbox{ and } \|\bX\| \leq 1 \}$ using the bijective linear mapping $\bA \mapsto \bP + \sqrt{\bP} \bA \sqrt{\bP}$ as $\sC_{\bP} = \bP + \sqrt{\bP} \sB_{\bP} \sqrt{\bP}$.

\subsection{General Characterization}

The insight into the structure of even perturbation provided by Theorem \ref{thm:douglas-factorization} can now be exploited to solve the problem in its generality. 

\begin{theorem}[General Characterization]\label{thm:lin-eq-char}
    Let $\sC$ be the spectrahedron in $\sP_{q}$ corresponding to $\{(\bA_{\alpha}, \beta_{\alpha})\}_{\alpha \in A}$ as in (\ref{eqn:linear-constraints}) and let $\bP \in \sC$. Then, the following three are equivalent:  
    \begin{enumerate}
\item $\bP \in \Ext \sC$
    
        \item there exists no nonzero self-adjoint $\bX \in \sS$ such that such that $\sR(\bX) \subset \overline{\sR(\bP) }$ and
        \begin{equation}\label{eqn:lin-eq-char}
            \Tr \sqrt{\bP}\bA_{\alpha} \sqrt{\bP}\bX = 0 \mbox{ for } \alpha \in A, 
        \end{equation}
        
        \item $\Span ~\{\sqrt{\bP}\bA_{\alpha} \sqrt{\bP}\}_{\alpha \in A}$ is dense in the subspace of self-adjoint trace-class operators with range in $\overline{\sR(\bP)}$.
    \end{enumerate}
\end{theorem}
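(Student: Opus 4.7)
The plan is to prove (1) $\Leftrightarrow$ (2) using Theorem \ref{thm:douglas-factorization} and then (2) $\Leftrightarrow$ (3) by a Hahn--Banach duality argument in the trace ideals.

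First, for (1) $\Leftrightarrow$ (2), I would use the fact that $\bP$ is extreme in $\sC$ iff it admits no nonzero even perturbation $\bH$ with $\bP \pm \bH \in \sC$. By Theorem \ref{thm:douglas-factorization}, the even perturbations of $\bP$ in $\sP$ are exactly $\bH = \sqrt{\bP}\bX\sqrt{\bP}$ for $\bX = \bX^{\ast} \in \sS$ with $\sR(\bX) \subset \overline{\sR(\bP)}$ and $\|\bX\| \leq 1$. Since $\bP \in \sS_{q}$ implies $\sqrt{\bP} \in \sS_{2q}$, H\"older's inequality for trace ideals automatically gives $\bH \in \sS_{q}$, so the linear constraints defining $\sC$ are well-defined on $\bP \pm \bH$. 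These constraints reduce to $\Tr \bA_{\alpha} \bH = 0$ for every $\alpha \in A$, which by cyclicity of the trace equals $\Tr \sqrt{\bP}\bA_{\alpha}\sqrt{\bP}\bX$, the quantity appearing in (\ref{eqn:lin-eq-char}). Two minor points to handle: the norm bound $\|\bX\| \leq 1$ is inessential, since any nonzero $\bX$ can be rescaled to satisfy it without changing the validity of (\ref{eqn:lin-eq-char}); and the map $\bX \mapsto \sqrt{\bP}\bX\sqrt{\bP}$ is injective on self-adjoint $\bX \in \sS$ with $\sR(\bX) \subset \overline{\sR(\bP)}$, because self-adjointness forces $\bX$ to vanish on $\overline{\sR(\bP)}^{\perp} = \sN(\sqrt{\bP})$, so $\sqrt{\bP}\bX\sqrt{\bP} = 0$ already implies $\bX = 0$.

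Next, for (2) $\Leftrightarrow$ (3), I would use trace-ideal duality. Let $Y$ denote the real Banach space of self-adjoint trace-class operators on $\sH$ with range in $\overline{\sR(\bP)}$, equipped with the trace norm; $Y$ is closed in $\sS_{1}$ since both defining conditions are preserved under trace-norm convergence. Each $\sqrt{\bP}\bA_{\alpha}\sqrt{\bP}$ lies in $Y$: it is self-adjoint because $\bA_{\alpha}$ is; its range is contained in $\sR(\sqrt{\bP}) \subset \overline{\sR(\bP)}$; and H\"older gives $\|\sqrt{\bP}\bA_{\alpha}\sqrt{\bP}\|_{1} \leq \|\sqrt{\bP}\|_{2q}^{2}\|\bA_{\alpha}\|_{p} < \infty$. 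Using Remark \ref{remark:Xfunctional} and the duality $\sS_{1}(\overline{\sR(\bP)})^{\ast} \equiv \sS(\overline{\sR(\bP)})$, the continuous self-adjoint linear functionals on $Y$ are exactly the maps $l_{\bX}(\bA) = \Tr \bX \bA$ parameterized by self-adjoint $\bX \in \sS$ with $\sR(\bX) \subset \overline{\sR(\bP)}$, and $l_{\bX}(\sqrt{\bP}\bA_{\alpha}\sqrt{\bP})$ coincides with the left-hand side of (\ref{eqn:lin-eq-char}) by cyclicity. The Hahn--Banach theorem then gives: the span of $\{\sqrt{\bP}\bA_{\alpha}\sqrt{\bP}\}_{\alpha \in A}$ is dense in $Y$ iff no nonzero such $\bX$ annihilates every $\sqrt{\bP}\bA_{\alpha}\sqrt{\bP}$, which is precisely the equivalence (2) $\Leftrightarrow$ (3).

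The main obstacle lies in the careful bookkeeping of the duality step: verifying that the closed subspace $Y$ of $\sS_{1}(\sH)$ is naturally identified with the self-adjoint part of $\sS_{1}(\overline{\sR(\bP)})$, that its continuous dual (in the real-linear sense) is the self-adjoint part of $\sS(\overline{\sR(\bP)})$, and that the dual representatives pull back to self-adjoint $\bX \in \sS(\sH)$ with $\sR(\bX) \subset \overline{\sR(\bP)}$. Once this identification is in place, the rest is Douglas factorization plus Hahn--Banach.
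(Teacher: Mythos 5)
Your proposal is correct and follows essentially the same route as the paper: Theorem \ref{thm:douglas-factorization} converts even perturbations of $\bP$ in $\sC$ into self-adjoint $\bX$ with $\sR(\bX)\subset\overline{\sR(\bP)}$ annihilating the $\sqrt{\bP}\bA_{\alpha}\sqrt{\bP}$, and the duality $\sS_{1}(\overline{\sR(\bP)})^{\ast}\equiv\sS(\overline{\sR(\bP)})$ with Hahn--Banach gives the density statement. Your added checks --- injectivity of $\bX\mapsto\sqrt{\bP}\bX\sqrt{\bP}$ on the relevant class, and the H\"older estimates placing $\bH$ in $\sS_{q}$ and $\sqrt{\bP}\bA_{\alpha}\sqrt{\bP}$ in $\sS_{1}$ --- are points the paper leaves implicit, and they are correct.
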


\begin{remark}Curiously, the linear constraints $\Tr \bP \bA_{\alpha} = \beta_{\alpha}$ can be expressed in a form similar to \eqref{eqn:lin-eq-char}, namely as $\Tr \sqrt{\bP}\bA_{\alpha} \sqrt{\bP} = \beta_{\alpha}$, although this does not seem to have any deeper significance.
\end{remark}

\begin{proof}
    Let $\bH \in \sS$ be an even perturbation of $\bP$ in $\sC$, that is, $\bP \pm \bH \in \sC$. In other words, it is a perturbation of $\sP$, that is, $\bH^{\ast} = \bH$ and $\bP \pm \bH \succeq \bzero$, and satisfies $\Tr \bH\bA_{\alpha} = 0$ for $\alpha \in A$. By Theorem \ref{thm:douglas-factorization}, this is equivalent to saying that there exists some self-adjoint $\bX \in \sS$ such that $\sR(\bX) \subset \overline{\sR(\bP) }$ and
    \begin{equation}\label{eqn:papx}
        \Tr \sqrt{\bP} \bX  \sqrt{\bP}\bA_{\alpha} = \Tr \sqrt{\bP}\bA_{\alpha}\sqrt{\bP}\bX = 0 \mbox{ for } \alpha \in A,
    \end{equation} 
    since we can always scale $\bX$ such that $\|\bX\| \leq 1$. Note that $\sqrt{\bP}\bA_{\alpha}\sqrt{\bP} \in \sS_{1}$. It follows that $\bP$ is an extreme point iff such an $\bX$ does not exist.
    
    As in Remark \ref{remark:Xfunctional}, we can think of  $\bX$ as a bounded linear functional $l_{\bX}$ of norm $\leq 1$ on the space of trace-class operators $\bA: \overline{\sR(\bP)} \to \overline{\sR(\bP)}$. Notice that $l_{\bX}$ is necessarily a self-adjoint bounded extension of norm $\leq 1$ of the linear functional $l: \Span ~\{\sqrt{\bP}\bA_{\alpha}\sqrt{\bP}\}_{\alpha \in A}  \to \bbF$ given by $l_{\bX}(\bY) = \Tr \bY\bX$ to the whole space $\sS_{1}(\overline{\sR(\bP)})$ or equivalently, a bounded extension of norm $\leq 1$ to the subspace of self-adjoint operators in $\sS_{1}(\overline{\sR(\bP)})$. It follows that there exists no nonzero $\bX$ satisfying the required qualities iff the only bounded extension $l$ admits to the subspace of self-adjoint operators in $\sS_{1}(\overline{\sR(\bP)})$ is zero, which is precisely when the domain $\Span ~\{\sqrt{\bP}\bA_{\alpha}\sqrt{\bP}\}_{\alpha \in A}$ of $l$ is dense in the subspace of self-adjoint operators on $\sS_{1}(\overline{\sR(\bP)})$. Hence proved.
\end{proof}

Theorem \ref{thm:lin-eq-char} reduces the question of whether $\bP$ is extreme to that of whether a system of linear equations has a particular kind of nonzero solution. The set $\sC_{\bP}$ of even perturbations of $\bP$ in $\sC$ is now given by
\begin{equation*}
    \sC_{\bP} = \bP + \sqrt{\bP} \left[ \sB_{\bP} \cap \sL \right] \sqrt{\bP},
\end{equation*}
where the space $\sL = \{\bX \in \sS: \Tr \sqrt{\bP}\bA_{\alpha} \sqrt{\bP}\bX = 0 \mbox{ for } \alpha \in A\}$  ensures that the perturbations respect the linear constraints on $\bP$.

\subsection{Finite Number of Constraints}
The results acquire an interesting flavor when the number of constraints is finite. In particular, the extreme points have to be of finite rank and one can abstract away the connection to linear systems and express the result purely in terms of ranks.

\begin{theorem}[Rank Characterization]\label{thm:rank-eq-char}
    Let $n \geq 1$, $A = \{1, \dots, n\}$ and $\sC$ be the spectrahedron corresponding to $\{(\bA_{j}, \beta_{j})\}_{j=1}^{n}$ as in (\ref{eqn:linear-constraints}). For $\bP \in \sC$, we have that $\bP \in \Ext \sC$ iff the following three conditions all hold true:
     
    \begin{enumerate}
\item $\rank \bP < \infty$.
    
        \item $\rank \left[\Tr \bP \bA_{i}\bP \bA_{j}\right]_{i,j=1}^{n} = \tfrac{1}{2}(\rank \bP) (1 + \rank \bP)$ for $\bbF = \bbR$. 
        \item $\rank \left[\Tr \bP \bA_{i}\bP \bA_{j}\right]_{i,j=1}^{n} = (\rank \bP)^{2}$ for $\bbF = \bbC$.
    \end{enumerate}
\end{theorem}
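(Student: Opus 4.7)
The plan is to apply Theorem \ref{thm:lin-eq-char} and translate its density criterion into a Gram-matrix rank identity. Write $\bB_{j} = \sqrt{\bP}\bA_{j}\sqrt{\bP}$. Each $\bB_{j}$ is self-adjoint with range in $\overline{\sR(\bP)}$, and belongs to $\sS_{1}$ by the submultiplicative trace-ideal bound recalled in Section \ref{sec:background} (since $\sqrt{\bP} \in \sS_{2q}$ and $\bA_{j} \in \sS_{p}$). Let $V$ denote the real vector space of self-adjoint trace-class operators with range in $\overline{\sR(\bP)}$, endowed with the Hilbert--Schmidt inner product $\langle \bA,\bB\rangle = \Tr \bA\bB$, which is real-valued on $V \times V$ by self-adjointness and trace cyclicity. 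Theorem \ref{thm:lin-eq-char} then says $\bP \in \Ext\sC$ iff $\Span\{\bB_{j}\}_{j=1}^{n}$ is dense in $V$.

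Since any finite-dimensional subspace of a normed space is automatically closed, density of the $(\leq n)$-dimensional span in $V$ forces $\dim_{\bbR} V \leq n < \infty$. As $V$ is canonically isomorphic to the space of self-adjoint bounded operators on the Hilbert space $\overline{\sR(\bP)}$, this is equivalent to $r := \rank \bP < \infty$, yielding condition (i). A standard parameter count then gives $\dim_{\bbR} V = \tfrac{1}{2}r(r+1)$ for $\bbF = \bbR$ (real symmetric matrices) and $\dim_{\bbR} V = r^{2}$ for $\bbF = \bbC$ (Hermitian matrices: $r$ real diagonal and $r(r-1)$ real off-diagonal parameters).

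Assuming $r < \infty$, I would use the cyclic property of the trace to compute the Gram matrix
\[ G_{ij} := \langle \bB_{i},\bB_{j}\rangle = \Tr(\sqrt{\bP}\bA_{i}\sqrt{\bP})(\sqrt{\bP}\bA_{j}\sqrt{\bP}) = \Tr \bP\bA_{i}\bP\bA_{j}, \]
so $G$ is precisely the matrix appearing in conditions (ii) and (iii). Since $V$ is now finite-dimensional, density becomes equality, and by the standard identity $\rank G = \dim_{\bbR}\Span\{\bB_{j}\}_{j=1}^{n}$, the condition collapses to $\rank G = \dim_{\bbR} V$. Substituting the two dimension values from the previous step gives exactly conditions (ii) and (iii).

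The step that requires the most care, in my view, is the dimension count for $\bbF = \bbC$: even though the $\bB_{j}$ act on a complex Hilbert space, $V$ is only a \emph{real} vector space, and its real dimension is $r^{2}$ rather than $\tfrac{1}{2}r(r+1)$. Beyond that, the argument is a direct reading of Theorem \ref{thm:lin-eq-char} in Gram-matrix language and requires no further analytic input.
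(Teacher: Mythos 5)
Your proposal is correct and follows essentially the same route as the paper's proof: reduce via Theorem \ref{thm:lin-eq-char} to comparing $\dim \sX(\bP)$ with the dimension of the span of the $\sqrt{\bP}\bA_{j}\sqrt{\bP}$, identify the latter with the rank of the Gram matrix $\left[\Tr \bP\bA_{i}\bP\bA_{j}\right]_{i,j}$ via trace cyclicity, and count $\tfrac{1}{2}r(r+1)$ resp. $r^{2}$ parameters. Your explicit remark that $V$ is only a \emph{real} vector space in the complex case, so that the relevant count is the real dimension $r^{2}$ of the Hermitian matrices, makes precise a point the paper passes over silently, but the argument is otherwise identical.
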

\begin{proof}
    By Theorem \ref{thm:lin-eq-char}, $\bP \in \Ext \sC$ is equivalent to the absence of a nonzero self-adjoint operator $\bX \in \sS$ satisfying $\sR(\bX) \subset \overline{\sR(\bP)}$ and (\ref{eqn:lin-eq-char}). This is same as saying that the subspace 
    \begin{equation*}
        \sX(\bP) = \{\bX \in \sS: \bX^{\ast} = \bX \mbox{ and } \sR(\bX) \subset \overline{\sR(\bP)}\} \subset \sS.
    \end{equation*}
    is spanned by the operators $\{\sqrt{\bP}\bA_{j} \sqrt{\bP}\}_{j=1}^{n} \subset \sX(\bP)$, which is precisely when $\bP$ is of finite rank (as $\dim \sR(\bP) < \infty$) and
    \begin{equation*}
        \dim \sX(\bP) = \dim \Span \{\sqrt{\bP}\bA_{j} \sqrt{\bP}\}_{j=1}^{n}.
    \end{equation*}
    Here we can write
    \begin{align*}
        \dim \Span \{\sqrt{\bP}\bA_{j} \sqrt{\bP}\}_{j=1}^{n} 
        &= \rank \left[ \Tr \sqrt{\bP}\bA_{i} \sqrt{\bP} \cdot \sqrt{\bP}\bA_{j} \sqrt{\bP}  \right]_{i,j=1}^{n} \\
        &= \rank \left[\Tr \bP \bA_{i}\bP \bA_{j}\right]_{i,j=1}^{n}
    \end{align*}
    using the fact that the dimension of the span of vectors is equal to the rank of their Gram matrix. Moreover, $\dim \sX(\bP)$ is equal to the dimension of the space spanned by self-adjoint matrices $\bX \in \bbF^{r \times r}$ where $r = \rank \bP$ which can be calculated as $\tfrac{1}{2}r (1 + r)$ for $\bbF = \bbR$ and $r^{2}$ for $\bbF = \bbC$. The result follows.
\end{proof}

Theorem \ref{thm:rank-eq-char} thus allows us to describe the set $\Ext \sC$ in terms of a rank equation as the set of $\bP \in \sC$ which satisfy
\begin{equation*}
    \rank \left[\Tr \bP \bA_{i}\bP \bA_{j}\right]_{i,j=1}^{n} = \dim \sX(\bP) 
\end{equation*}
where $\dim \sX(\bP) = \tfrac{1}{2}(\rank \bP) (1 + \rank \bP)$ for $\bbF = \bbR$ and $(\rank \bP)^{2}$ for $\bbF = \bbC$.

Using Theorem \ref{thm:rank-eq-char}, we can completely work out the set of rank one extreme points of a spectrahedron.
\begin{corollary}[Rank One Extreme Points]
    Let $\sC$ be the spectrahedron corresponding to $\{(\bA_{j}, \beta_{j})\}_{j=1}^{n}$. Then for both $\bbF = \bbR$ and $\bbC$,
    \begin{enumerate}
        \item if $\{\beta_{j}\}_{j=1}^{n} = \{0\}$, then $\sC$ has no rank one extreme points, and
        \item if $\{\beta_{j}\}_{j=1}^{n} \neq \{0\}$, then the rank one extreme points of $\sC$ are given precisely by $\bP = x \otimes x$ for nonzero $x \in \sH$ satisfying $\langle \bA_{j}x, x \rangle = \beta_{j}$ for every $j \in \{1, \dots, n\}$.
    \end{enumerate}
\end{corollary}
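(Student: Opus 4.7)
The plan is to apply Theorem \ref{thm:rank-eq-char} directly, since a rank one positive operator has a completely explicit form which makes the rank of the relevant Gram matrix easy to compute.

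First I would parametrize: every rank one positive operator is of the form $\bP = x \otimes x$ for some nonzero $x \in \sH$ (absorbing a positive scalar into $x$). A short computation using $(x \otimes x) \bA = x \otimes \bA x$ and $(x \otimes x)\bA(x \otimes x) = \langle x, \bA x \rangle (x \otimes x)$, followed by taking trace, yields
\begin{equation*}
    \Tr \bP \bA_{i} \bP \bA_{j} = \langle x, \bA_{i} x \rangle \langle x, \bA_{j} x \rangle.
\end{equation*}
If in addition $\bP \in \sC$, the constraint $\Tr \bP \bA_{j} = \langle x, \bA_{j} x \rangle = \beta_{j}$ reduces the Gram matrix in Theorem \ref{thm:rank-eq-char} to the rank one outer product $\beta \beta^{\top}$ where $\beta = (\beta_{1}, \dots, \beta_{n})^{\top}$.

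Next I would compute $\dim \sX(\bP)$. Since $\rank \bP = 1$, we have $\dim \sX(\bP) = \tfrac{1}{2}(1)(1+1) = 1$ for $\bbF = \bbR$ and $1^{2} = 1$ for $\bbF = \bbC$, so the required rank condition in Theorem \ref{thm:rank-eq-char} becomes simply $\rank(\beta \beta^{\top}) = 1$, which is equivalent to $\beta \neq 0$, i.e.\ $\{\beta_{j}\}_{j=1}^{n} \neq \{0\}$.

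Finally I would split into the two cases. In case (1), when all $\beta_{j} = 0$, the Gram matrix vanishes and has rank $0 \neq 1$, so by Theorem \ref{thm:rank-eq-char} no rank one $\bP \in \sC$ can be extreme. In case (2), any $\bP = x \otimes x$ satisfying $\langle x, \bA_{j} x \rangle = \beta_{j}$ for every $j$ automatically lies in $\sC$ (by the trace computation above), and the rank equation $\rank(\beta \beta^{\top}) = 1 = \dim \sX(\bP)$ is satisfied, so $\bP \in \Ext \sC$; conversely, any rank one extreme point must take this form because it lies in $\sC$. There is no real obstacle here: the entire argument is a specialization of Theorem \ref{thm:rank-eq-char}, and the only thing to be mildly careful about is the elementary computation of $\Tr \bP \bA_{i} \bP \bA_{j}$ for rank one $\bP$, which is a one-line manipulation.
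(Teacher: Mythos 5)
Your proof is correct and follows exactly the route the paper intends: the corollary is stated as an immediate specialization of Theorem \ref{thm:rank-eq-char} to $\rank \bP = 1$, where $\dim\sX(\bP)=1$ in both the real and complex cases and the Gram matrix collapses to $\beta\beta^{\top}$. The computation $\Tr \bP\bA_{i}\bP\bA_{j}=\langle x,\bA_{i}x\rangle\langle x,\bA_{j}x\rangle$ and the resulting dichotomy on whether $\beta=0$ are precisely what is needed, so there is nothing to add.
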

\noindent
In other words, if every $\beta_{j}$ is $0$, then $\sC$ has no extreme points and if $\beta_{j}$ are not all zero, then the rank one extreme operators of $\sC$ are precisely the rank one operators in $\sS$ satisfying the linear constraints (\ref{eqn:linear-constraints}). The following example further illustrates the utility of Theorem \ref{thm:rank-eq-char}.

\begin{example}[Density Operators]    
    The spectrahedron $\sC = \{\bP \succeq \bzero: \Tr \bP = 1\}$ corresponding to density operators in $\sS$ is given by $\{(\bA_{j}, \beta_{j})\}_{j=1}^{n} =\{(\bI, 1)\}$. Consequently, $\bP \in \sC$ is an extreme density operator iff 
    \begin{equation*}
        \rank \left[\Tr \bP \bI \bP \bI \right]_{i,j=1}^{1} = 1 = (\rank \bP)^{2},
    \end{equation*}
    that is, $\rank \bP = 1$. It follows that $\Ext \sC = \{x \otimes x: x \in \sH \mbox{ and } \|x\|_{\sH} = 1 \}$.
\end{example}

Theorem \ref{thm:rank-eq-char} also allows us to derive the classical Barvinok-Pataki bounds on the ranks of extreme points. In semi-definite programming and quantum physics, often we do not know if a given semi-definite program or quantum marginal problem admits a solution (that is, whether $\sC$ is nonempty). In these cases, such upper bounds tell us that a solution exists iff there exists a solution of low enough rank and this often simplifies the problem computationally.

\begin{corollary}\label{thm:bpbounds}
    Let $n \geq 1$, $A = \{1, \dots, n\}$ and $\sC$ be the spectrahedron corresponding to $\{(\bA_{j}, \beta_{j})\}_{j=1}^{n}$ as in (\ref{eqn:linear-constraints}). If $\bP \in \Ext \sC$, then $n \geq \tfrac{1}{2}(\rank \bP) (1 + \rank \bP)$ for $\bbF = \bbR$ and $n \geq (\rank \bP)^{2}$ for $\bbF = \bbC$.
\end{corollary}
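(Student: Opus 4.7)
The plan is to derive this as an immediate corollary of Theorem \ref{thm:rank-eq-char}. Assume $\bP \in \Ext \sC$. By that theorem, $\bP$ has finite rank $r = \rank \bP < \infty$ and
\begin{equation*}
    \rank \left[\Tr \bP \bA_{i}\bP \bA_{j}\right]_{i,j=1}^{n} \;=\; \dim \sX(\bP),
\end{equation*}
where $\dim \sX(\bP) = \tfrac{1}{2}r(1+r)$ in the real case and $r^{2}$ in the complex case.

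The key observation I would then invoke is trivial but decisive: the matrix $\left[\Tr \bP \bA_{i}\bP \bA_{j}\right]_{i,j=1}^{n}$ is an $n \times n$ matrix over $\bbF$, so its rank is at most $n$. Combining this with the equality above yields $n \geq \dim \sX(\bP)$, which is exactly the claimed bound in each of the two cases.

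There is no real obstacle here; the work was done in establishing Theorem \ref{thm:rank-eq-char}, and this corollary is essentially a bookkeeping consequence of the fact that a Gram-type matrix of size $n \times n$ cannot have rank exceeding $n$. One stylistic point worth mentioning in the write-up is that the bound is automatically vacuous when $\rank \bP = \infty$, since Theorem \ref{thm:rank-eq-char} already forces any extreme point to be of finite rank; so the statement is really a constraint that pins down how large $\rank \bP$ can be relative to the number $n$ of constraints, and explains why extreme points of spectrahedra with few linear constraints are necessarily of low rank.
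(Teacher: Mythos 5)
Your proof is correct and is exactly the argument the paper intends: the corollary is stated without proof immediately after Theorem \ref{thm:rank-eq-char}, and the intended derivation is precisely that the $n \times n$ Gram-type matrix $\left[\Tr \bP \bA_{i}\bP \bA_{j}\right]_{i,j=1}^{n}$ has rank at most $n$, which combined with the rank equality of that theorem gives $n \geq \dim \sX(\bP)$. Your remark that finiteness of $\rank \bP$ is already guaranteed by the theorem is also accurate.
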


Even when $\bP \in \sC$ is not an extreme point, the dimension of the subspace spanned by its even perturbation tells us something interesting about the location of $\bP$ in $\sC$. This is known as the \emph{facial dimension} of $\bP$ in $\sC$ and it is not difficult to see that we can write it down explicitly as  
\begin{equation*}
    \dim \sX(\bP) - \rank \left[\Tr \bP \bA_{i}\bP \bA_{j}\right]_{i,j=1}^{n}
\end{equation*}
where $\dim \sX(\bP) = \tfrac{1}{2}(\rank \bP) (1 + \rank \bP)$ for $\bbF = \bbR$ and $(\rank \bP)^{2}$ for $\bbF = \bbC$.

\section{Correlation Matrices and Hadamard Rank Inequality}
\label{sec:hadamard}

Let $n \geq 1$, $\sH = \bbF^{n}$ and $\{e_{i}\}_{i=1}^{n}$ be the standard orthonormal basis of $\sH$. The space $\sS$ can then be identified with the space $\bbF^{n \times n}$ of $n \times n$ matrices with entries in $\bbF$. We say that $\bP \in \sP$ is a correlation matrix if $\bP_{ii} = \langle e_{i}, \bP e_{i} \rangle = 1$ for $1 \leq i \leq n$. The spectrahedron $\sC$ of correlation matrices is given by $\{(\bA_{j}, \beta_{j})\}_{j=1}^{n} =\{(e_{j} \otimes e_{j}, 1)\}_{j=1}^{n}$ as in (\ref{eqn:linear-constraints}). Using our results, we can now produce an arguably neat characterization of $\Ext \sC$.

For $\bA = [\bA_{ij}]_{i,j=1}^{n}, \bB  = [\bB_{ij}]_{i,j=1}^{n} \in \bbF^{n \times n}$, we define the Hadamard product $\bA \odot \bB$ as the matrix $[\bA_{ij}\bB_{ij}]_{i,j=1}^{n}$ of entry-wise products.
Naturally, the Hadamard square $\bA^{\odot 2}$ of $\bA$ is given by $\bA^{\odot 2} = \bA \odot \bA$.

\begin{corollary}\label{thm:extrm-correlation}
    Let $\sC$ be the convex set of correlation matrices in $\bbF^{n \times n}$. Then for $\bP \in \sC$,  
    \begin{enumerate}
        \item When $\bbF=\bbR$, we have  $\bP \in \Ext \sC \iff \rank \bP^{\odot 2} = \tfrac{1}{2}(\rank \bP) (1 + \rank \bP)$ 
        \item When $\bbF=\bbC$, we have  $\bP \in \Ext \sC \iff\rank \bP^{\odot 2} = (\rank \bP)^{2}$ 
    \end{enumerate}
\end{corollary}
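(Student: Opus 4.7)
The plan is to apply the rank characterization of Theorem \ref{thm:rank-eq-char} directly to the spectrahedral description of correlation matrices. First I would recast the diagonal constraints $\bP_{ii} = 1$ in the form of (\ref{eqn:linear-constraints}) by taking $\bA_j = e_j \otimes e_j$ and $\beta_j = 1$ for $j = 1,\dots,n$, which gives $\Tr \bP (e_j \otimes e_j) = \langle e_j, \bP e_j \rangle = \bP_{jj} = 1$. Since $\sH = \bbF^{n}$ is finite-dimensional, the finite-rank requirement in Theorem \ref{thm:rank-eq-char} is automatic, so only the rank equality has to be verified.

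The heart of the argument is identifying the $n \times n$ Gram-type matrix $M = [\Tr \bP \bA_i \bP \bA_j]_{i,j=1}^{n}$ entry-wise. Using $(e_k \otimes e_k)x = \langle e_k, x \rangle e_k$ and the cyclic property of the trace, a short computation gives
\begin{equation*}
    \Tr \bP (e_i \otimes e_i) \bP (e_j \otimes e_j) = \langle e_i, \bP e_j \rangle \langle e_j, \bP e_i \rangle = \bP_{ij}\,\bP_{ji}.
\end{equation*}
Self-adjointness of $\bP$ then yields $M_{ij} = \bP_{ij}^{2}$ in the real case and $M_{ij} = |\bP_{ij}|^{2}$ in the complex case; both are the natural entry-wise square $\bP^{\odot 2}$ once the Hadamard product is interpreted in the sense appropriate to the base field (Hermitian squaring in the complex case).

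The final step is pure bookkeeping: substituting the value of $\dim \sX(\bP)$ supplied by Theorem \ref{thm:rank-eq-char} turns the extremality condition $\rank M = \dim \sX(\bP)$ into $\rank \bP^{\odot 2} = \tfrac{1}{2}(\rank \bP)(1+\rank \bP)$ over $\bbR$ and $\rank \bP^{\odot 2} = (\rank \bP)^{2}$ over $\bbC$, exactly the two assertions of the corollary.

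The only real obstacle is conceptual rather than technical: one has to recognize that in the complex case the relevant Hadamard-type square read off from the trace is $[\bP_{ij}\overline{\bP_{ij}}]$ rather than $[\bP_{ij}^{2}]$, and that this is what $\bP^{\odot 2}$ is meant to encode here (a sesquilinear version of the Hadamard square for Hermitian matrices). Once this identification is in place, the corollary is an immediate specialization of the finite-constraint rank characterization.
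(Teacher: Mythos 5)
Your proof is correct and follows the paper's route exactly: recast the diagonal constraints via $\bA_j = e_j \otimes e_j$, compute the Gram matrix $\left[\Tr \bP \bA_i \bP \bA_j\right]_{i,j=1}^{n}$ entrywise, and invoke Theorem \ref{thm:rank-eq-char}. Your side remark on the complex case is in fact a small correction to the paper rather than a mere interpretive choice: the trace evaluates to $\langle e_i, \bP e_j\rangle\langle e_j, \bP e_i\rangle = \bP_{ij}\bP_{ji} = |\bP_{ij}|^2$ (consistent with the Gram matrix of self-adjoint operators having real entries), so over $\bbC$ the relevant matrix is $\bP \odot \overline{\bP}$ rather than the literal entrywise square $[\bP_{ij}^2]$ that the paper's definition of $\bP^{\odot 2}$ would give.
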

\begin{proof}
    Note that $\left[\Tr \bP \bA_{i}\bP \bA_{j}\right]_{i,j=1}^{n} = [\bP_{ij}^{2}]_{i,j = 1}^{n} = \bP^{\odot 2}$ which is the Hadamard square of the matrix $\bP$, since
    \begin{equation*}
        \Tr \bP \bA_{i} \bP \bA_{j} = \Tr (\bP e_{i} \otimes e_{i}) (\bP e_{j} \otimes e_{j}) = \langle e_{i}, \bP e_{j} \rangle \langle \bP e_{i}, e_{j} \rangle = \bP_{ij}^{2}. 
    \end{equation*}
    The conclusion follows from Theorem \ref{thm:rank-eq-char}.
\end{proof}


Note that Corollary \ref{thm:extrm-correlation} (2) is the equality case for an instance of the Hadamard rank inequality: 
\begin{equation*}
    \rank (\bA \odot \bB) \leq (\rank \bA)(\rank \bB)
\end{equation*}
where $\bA = \bB = \bP \in \sP$. 
It follows that for correlation matrices, the Hadamard rank inequality is an equality precisely when the involved matrix is an extreme correlation matrix. Notice that multiplying a positive semi-definite matrix $\bA$ on both sides by the diagonal matrix $\bB$ with the entries given by $\bB_{jj} = 1/\sqrt{\bA_{jj}}$ for $\bA_{jj} > 0$ and $0$ otherwise, makes it a correlation matrix $\bP = \bB\bA\bB$. Therefore, we can trivially extend this result to positive semi-definite matrices.
\begin{corollary}[Hadamard Rank Inequality]
    Let $\bA = [\bA_{ij}]_{i,j = 1}^{n} \in \bbF^{n \times n}$ be a positive semi-definite matrix. Then 
    \begin{equation}\label{eqn:hadamard}
        \rank \bA^{\odot 2} \leq (\rank \bA)^{2}
    \end{equation}
    with equality iff $\bA$ is an extreme point of the spectrahedron $\sC = \{\bP = [\bP_{ij}]_{i,j=1}^{n}\succeq \bzero: \bP_{jj} = \bA_{jj} \mbox{ for } j \in \{1, \dots , n\}\} \subset \bbC^{n \times n}$ (for $\bbF = \bbR$, we consider $\bbR^{n \times n}$ a subset of $\bbC^{n \times n}$). 
\end{corollary}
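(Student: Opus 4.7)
The plan is to reduce to the already-established result (Corollary \ref{thm:extrm-correlation}) by a diagonal normalisation, so the bulk of the work is routine once one handles the entries with $\bA_{jj} = 0$.

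\textbf{Step 1 (Peeling off zero diagonals).} Let $S = \{j : \bA_{jj} > 0\}$ and $S^{c} = \{1,\dots,n\} \setminus S$. Since $\bA \succeq \bzero$, the Cauchy--Schwarz inequality $|\bA_{ij}|^{2} \leq \bA_{ii}\bA_{jj}$ forces $\bA_{ij} = 0$ whenever $i \in S^{c}$ or $j \in S^{c}$. Let $\bA_{S}$ denote the principal submatrix indexed by $S$. Then $\rank \bA = \rank \bA_{S}$, and since $\bA^{\odot 2}$ has the same zero pattern, $\rank \bA^{\odot 2} = \rank \bA_{S}^{\odot 2}$. Analogously, any $\bP \in \sC$ must vanish outside the $S \times S$ block, so $\bA \in \Ext \sC$ iff $\bA_{S}$ is extreme in $\sC_{S} = \{\bQ \succeq \bzero \text{ on } \bbC^{S \times S}: \bQ_{jj} = \bA_{jj} \text{ for } j \in S\}$.

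\textbf{Step 2 (Normalisation to a correlation matrix).} On $S$, define the invertible diagonal matrix $\bB$ by $\bB_{jj} = 1/\sqrt{\bA_{jj}}$, and set $\bP = \bB \bA_{S} \bB$. Then $\bP$ is a correlation matrix in the sense of Section \ref{sec:hadamard}. Since $\bB$ is invertible, $\rank \bP = \rank \bA_{S}$. Moreover, entrywise one has $\bP^{\odot 2} = \bB^{2}\, \bA_{S}^{\odot 2}\, \bB^{2}$, and the diagonal matrix $\bB^{2}$ is also invertible, so $\rank \bP^{\odot 2} = \rank \bA_{S}^{\odot 2}$. Finally, the linear bijection $\bQ \mapsto \bB \bQ \bB$ maps $\sC_{S}$ onto the elliptope on $\bbC^{S \times S}$ and preserves extreme points (affine bijections of convex sets do), so $\bA_{S} \in \Ext \sC_{S} \iff \bP \in \Ext (\text{elliptope})$.

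\textbf{Step 3 (Invoking Corollary \ref{thm:extrm-correlation}).} Applying part (2) of Corollary \ref{thm:extrm-correlation} in $\bbC^{|S| \times |S|}$ (which is why the corollary is stated for the complex spectrahedron even when $\bA$ happens to be real), we get $\rank \bP^{\odot 2} \leq (\rank \bP)^{2}$, with equality iff $\bP \in \Ext (\text{elliptope})$. Combining this with the identifications of Steps 1--2 yields $\rank \bA^{\odot 2} \leq (\rank \bA)^{2}$, with equality iff $\bA \in \Ext \sC$.

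The only genuinely delicate point is Step 1, where one must notice that zero diagonal entries of $\bA$ force entire rows and columns to vanish both in $\bA$ and in every $\bP \in \sC$; once this is settled, the rest reduces to checking that the diagonal conjugation $\bQ \mapsto \bB \bQ \bB$ transports ranks, Hadamard squares (up to another diagonal conjugation), and extreme points. I do not anticipate a serious obstacle beyond this bookkeeping.
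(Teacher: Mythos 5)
Your proof is correct and follows essentially the same route as the paper, which simply normalises $\bA$ to a correlation matrix by the diagonal conjugation $\bP = \bB\bA\bB$ and invokes Corollary \ref{thm:extrm-correlation}~(2). You are in fact somewhat more careful than the paper's one-line sketch, since you explicitly peel off the indices with $\bA_{jj}=0$ (where the paper's choice $\bB_{jj}=0$ does not literally yield a correlation matrix) and verify that ranks, Hadamard squares, and extreme points are all transported by the conjugation.
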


\begin{remark}
    Note that the above result is not of much relevance for real positive semi-definite matrices since these already satisfy $\rank \bA^{\odot 2} \leq \tfrac{1}{2}(\rank \bA)(\rank \bA + 1)$ and equality in (\ref{eqn:hadamard}) is achieved only for $\rank \bA = 1$.
\end{remark}

Although it is unclear how this result can be extended to the general case using the proposed techniques, it does reveal a rich structure underlying the equality case of the Hadamard rank inequality.

\section{Applications}
\label{sec:applications}

\subsection{Sensitivity Analysis of PCA with Incomplete Data}

In statistics, we often encounter covariance estimation problems with incomplete data. A particularly complicated version of the problem arising in longitudinal studies in medicine \cite{delaigle2021} can be stated as follows: 

\begin{quote}
    Let $X: [0, 1] \to \bbR$ be a (zero-mean) second order stochastic process. We are interested in estimating its covariance $K(s, t) = \bbE[X_{s}X_{t}]$ for $s, t \in I$ but we only have samples $X^{i}$ for $i = 1, \dots, N$ of the process which are observed over one of the overlapping intervals $I_{j}$ covering $I$ for $j = 1, \dots, r$. 
\end{quote}

Because the samples are incompletely observed, traditional methods of covariance estimation only allow us to estimate the restrictions $K|_{I_{j} \times I_{j}}$ of the covariance $K$ over the square $I_{j} \times I_{j} \subset I \times I$. In general, the covariance is not even uniquely determined even if we know the restrictions $K|_{I_{j} \times I_{j}}$ exactly. But we can construct a canonical completion $K_{\star}$ from these restrictions $K|_{I_{j} \times I_{j}}$ and it turns out that under certain intuitive and plausible conditions on the dependence structure of the process $X$, $K_{\star}$ is the true covariance of $X$ \cite{waghmare2022}.

However, when we are unsure about whether these conditions hold and thus, whether $K_{\star}$ is the true covariance of $X$, it is useful and reassuring to derive bounds on features of the covariance which are of interest to us. An example of such features are the eigenvalue-eigenvector pairs of the trace-class integral operator $\bP: L^{2}[0,1] \to L^{2}[0,1]$ induced by the true covariance $K: I \times I \to \bbR$ which are the basis for principal components analysis (PCA) \cite{hsing2015}. For instance, one might wish to bound the sum of the largest $q$ eigenvalues of $\bP$, which is a convex function of $\bP$ for $q \geq 1$. This will provide an indicator of the sensitivity of PCA to the assumptions that grant equality of the canonical completion and the true covariance.

We now illustrate how this can be done for the first eigenvalue of $\bP$ based on our results. 

For $j = 1, \dots, r$, let $\{e_{jk}\}_{k=1}^{\infty}$ be an orthonormal basis for $L^{2}(I_{j})$. Define $\tilde{e}_{jk}: I \to \bbR$ as $\tilde{e}_{jk}(t) = e_{jk}(t)$ for $t \in I_{j}$ and $0$ otherwise. We estimate the trace $\Tr \bP = \int \bbE[X_{t}^{2}]~dt$ and the inner products $\langle \bP \tilde{e}_{jk}, \tilde{e}_{jl} \rangle = \int_{I_{j}} \int_{I_{j}} K_{I_{j} \times I_{j}}(s, t) e_{jk}(s)e_{jl}(t) ~ds ~dt$ for every $j,l \leq p$ (for a suitably chosen $p \geq 1$) from the given samples. The true covariance operator $\bP$ can now be said to lie in a spectrahedron $\sC$ corresponding to $\{(\bI, \beta_{0})\} \cup \{(\bA_{jkl}, \beta_{jkl})\}_{j,k,l=1}^{r,p,p}$ where $\bA_{jkl} = \tilde{e}_{jk} \otimes \tilde{e}_{jl}$ and $\beta_{0}, \{\beta_{jkl}\}$ are the corresponding estimates.

To calculate an upper bound on the largest eigenvalue of the true covariance, we solve $\max_{\bP \in \sC} \lambda_{1}(\bP)$ where $\lambda_{1}(\bP)$ denotes the first eigenvalue of $\bP$. Note that the maximum exists (because the unit ball in trace norm $\|\cdot\|_{1}$ is weak$^{\ast}$ compact) and it is achieved at an extreme point. Corollary \ref{thm:bpbounds} tells us that the rank of an extreme point does not exceed $R = \sqrt{2rp^{2}+9/4} - 1/2$. It follows that we need to search only over positive operators $\bP$ of rank $ \leq R$ satisfying the linear constraints instead of \emph{all} such positive trace-class operators of arbitrary rank. In this way, we can derive an upper bound of the largest eigenvalue of the true covariance. 

\subsection{Optimization on Quantum Mechanical States}

Consider a particle constrained to be on the unit interval $I = [0 ,1]$. The state-space associated with the particle is the space $\sH = L^{2}(I, \bbC)$ of square-integrable complex-valued functions on $I$. The position $X$ of the particle is a real-valued random variable and let's say that we are interested in characterizing the extreme states satisfying the moment constraints: $\bbE[X] = m_{1}$, $\bbE[X^{2}] = m_{2}$ and $\bbE[X^{3}] = m_{3}$. 

The state of the particle is represented by a density operator, that is, $\bP \in \sP$ satisfying $\Tr \bP = 1$. The moment constraints translate to $\{(\bA_{j}, \beta_{j})\}_{j=1}^{3}$ where $\bA_{j} \in \sS$ such that $\bA_{j} f(u) = u^{j} f(u)$ for $u \in [0, 1]$ and $\beta_{j} = m_{j}$. Together we have $4$ linear constraints, and by Corollary \ref{thm:bpbounds}, $\rank \bP \leq 2$ for every extreme point $\bP \in \sP$. It follows that $\bP = \alpha \psi \otimes \psi + (1-\alpha) \phi \otimes \phi$ for $0 \leq \alpha \leq 1$, $\psi,\phi \in \sH$ such that $\|\psi\|, \|\phi\| = 1$ and $\langle \psi, \phi \rangle = 0$. Imposing the linear constraints gives:
\begin{equation}\label{eqn:qm-constriants}
    \int u^{j} \Big[ \alpha|\psi(u)|^{2} + (1-\alpha)|\phi(u)|^{2}\Big] ~du = m_{j}
\end{equation}
for $j = 1, 2, 3$.

Now, if one wants to find a state that maximizes a strictly convex function over such states, the related optimization problem is much simpler. For example, the minimum of the von Neumann entropy $S(\bP) = - \Tr (\bP \log \bP)$ over such states can be found by solving the simpler optimization problem
\begin{equation*}
    \min_{\psi, \phi \in \sH}~ -\alpha\log(\alpha) - (1- \alpha)\log (1- \alpha) 
\end{equation*}
over orthonormal $\psi, \phi \in \sH$ satisfying (\ref{eqn:qm-constriants}). The study of entropy minimizing states is relevant to the study of maximally entangled states \cite{parthasarathy2005}.

\subsection*{Acknowledgments}

The authors would like to thank Prof. Johanna Ziegel, ETH Zurich and Prof. Paul Marriott, University of Waterloo for helpful discussions and encouragement.

\printbibliography
\end{document}